\documentclass{amsart}

\usepackage{amssymb}
\usepackage{amsthm}
\usepackage{amsmath} 
\usepackage{amsbsy}
\usepackage[all]{xy}
\usepackage{bm}
\usepackage{hyperref}
\usepackage{tikz}
\setlength\parindent{0pt}

\newtheorem*{theorem}{Theorem}

\newtheorem*{corollary}{Corollary}

\newcommand{\abs}[1]{\lvert #1 \rvert}

\newcommand{\di}{\operatorname{div}}

\begin{document}

\title[]{A variation on the Donsker-Varadhan inequality for the principal
  eigenvalue} \keywords{Donsker-Varadhan estimate, ground state, first
  eigenvalue, quantile decomposition, first exit time.}
\subjclass[2010]{35P15, 47D08 (primary) and 58J50 (secondary)}

\thanks{The research of J.L. was supported in part by the National Science Foundation under award DMS-1454939. S.S. was supported in part by the Institute
of New Economic Thinking under grant \#INO15-00038.}

\author[]{Jianfeng Lu}
\address[Jianfeng Lu]{Department of Mathematics, Department of Physics, and Department of Chemistry,
Duke University, Box 90320, Durham NC 27708, USA}
\email{jianfeng@math.duke.edu}

\author[]{Stefan Steinerberger}
\address[Stefan Steinerberger]{Department of Mathematics, Yale University, New Haven, CT 06510, USA}
\email{stefan.steinerberger@yale.edu}

\begin{abstract} The purpose of this short paper is to give a variation on the classical Donsker-Varadhan inequality, which bounds the first
eigenvalue of a second-order elliptic operator on a bounded domain $\Omega$ by the largest mean first exit time of
the associated drift-diffusion process via
 $$\lambda_1 \geq \frac{1}{\sup_{x \in \Omega} \mathbb{E}_x \tau_{\Omega^c}}.$$
Instead of looking at the mean of the first exit time, we study quantiles: let $d_{p, \partial \Omega}:\Omega \rightarrow \mathbb{R}_{\geq 0}$
be the smallest time $t$ such that the likelihood of exiting within that time is $p$, then
$$\lambda_1 \geq \frac{\log{(1/p)}}{\sup_{x \in \Omega} d_{p,\partial \Omega}(x)}.$$
Moreover, as $p \rightarrow 0$, this lower bound converges to $\lambda_1$.
\end{abstract}
\maketitle

\section{Introduction}
We consider, for open and bounded $\Omega \subset \mathbb{R}^n$,  solutions of the equation
\begin{align}
 -\mbox{div}(a(x) \nabla u(x)) + \nabla V \cdot \nabla u &= \lambda u  &&  \text{in }\Omega \\
u &= 0. && \text{on }\partial \Omega \nonumber
\end{align}
Estimating the smallest possible value of $\lambda_1$ for which the equation has a solution
is a problem of fundamental importance. Finding upper bounds is, in many instances, rather
straightforward by testing with a family of functions -- finding lower bounds is substantially more
difficult. An important conceptual leap is due to Donsker \& Varadhan, who take 
\begin{equation}
 L =  -\mbox{div}(a(x) \nabla u(x)) + \nabla V \cdot \nabla u
\end{equation}
and take $-L$ as the infinitesimal generator of a drift-diffusion process (here and in all
subsequent steps we always assume sufficient
regularity on both the operator and the domain). The maximum mean
exit time then serves as a lower bound of the first eigenvalue $\lambda_1$ (we use the formulation from \cite{BdH}).

\begin{theorem}[Donsker-Varadhan \cite{dv1, dons}, CPAM 1976] 
\begin{equation}
  \lambda_1 \geq \frac{1}{\sup_{x \in \Omega} \mathbb{E}_x \tau_{\Omega^c}}.
\end{equation}
\end{theorem}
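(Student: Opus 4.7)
The plan is to exploit the Feynman-Kac representation of the sub-Markovian semigroup generated by $-L$, and evaluate the eigenrelation at a maximum point of the first eigenfunction. Under the paper's blanket regularity assumptions, standard theory (e.g., Krein-Rutman applied to the compact resolvent of $L$) produces a first eigenfunction $u_1$ that is strictly positive on $\Omega$, continuous up to $\overline{\Omega}$, and vanishes on $\partial \Omega$. If $(X_t)_{t \geq 0}$ denotes the associated drift-diffusion and $\tau = \tau_{\Omega^c}$ its first exit time, then the killed semigroup $P_t f(x) := \mathbb{E}_x\bigl[f(X_t) \mathbf{1}_{\{\tau > t\}}\bigr]$ acts on $u_1$ as $P_t u_1 = e^{-\lambda_1 t} u_1$.

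Next, I would pick $x_0 \in \Omega$ at which $u_1$ attains its maximum; this maximum lies in the open set $\Omega$ since $u_1 \equiv 0$ on $\partial \Omega$. On the event $\{\tau > t\}$ we have $X_t \in \Omega$, so $0 < u_1(X_t) \leq u_1(x_0)$. Evaluating the semigroup identity at $x_0$ gives
\begin{equation*}
 e^{-\lambda_1 t} u_1(x_0) = \mathbb{E}_{x_0}\bigl[u_1(X_t) \mathbf{1}_{\{\tau > t\}}\bigr] \leq u_1(x_0)\, \mathbb{P}_{x_0}(\tau > t),
\end{equation*}
and dividing by $u_1(x_0) > 0$ yields the key pointwise bound $e^{-\lambda_1 t} \leq \mathbb{P}_{x_0}(\tau > t)$ valid for every $t \geq 0$. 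Integrating over $t \in [0,\infty)$ via the layer-cake identity,
\begin{equation*}
 \frac{1}{\lambda_1} = \int_0^\infty e^{-\lambda_1 t}\, dt \leq \int_0^\infty \mathbb{P}_{x_0}(\tau > t)\, dt = \mathbb{E}_{x_0} \tau \leq \sup_{x \in \Omega} \mathbb{E}_x \tau_{\Omega^c},
\end{equation*}
which rearranges to the claimed inequality.

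The main conceptual obstacle is not the three-line calculation above but rather the setup: establishing existence, strict positivity, and boundary regularity of $u_1$ for a non-self-adjoint second-order elliptic operator, and justifying the Feynman-Kac identity $P_t u_1 = e^{-\lambda_1 t} u_1$ for the killed diffusion. Both are classical under sufficient smoothness of $a$, $V$, and $\partial \Omega$, so I would cite them rather than reproving. An alternative, purely analytic route would pair the torsion-function identity $L T = 1$ (with Dirichlet data) against $u_1$ and invoke $L u_1 = \lambda_1 u_1$, but this requires either self-adjointness or a careful duality argument; the probabilistic route above avoids that issue entirely and, as a bonus, the pointwise bound $e^{-\lambda_1 t} \leq \mathbb{P}_{x_0}(\tau > t)$ it produces is exactly the kind of tail estimate that should generalize when one replaces the mean exit time by a quantile, as in the paper's main variant.
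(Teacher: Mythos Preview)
Your argument is correct, but it differs from the paper's proof of this particular statement. The paper proceeds purely analytically: it introduces the torsion function $w(x)=\mathbb{E}_x\tau_{\Omega^c}$, which solves $Lw=1$ with Dirichlet data, forms the combination $\lambda_1 w\,\|u\|_{L^\infty}-u$, checks that $L$ applied to it is nonnegative, and invokes the maximum principle to conclude $\lambda_1 w(x)\geq u(x)/\|u\|_{L^\infty}$; evaluating at the maximizer of $u$ finishes. Your route is probabilistic instead: you use the Feynman--Kac identity $P_t u_1=e^{-\lambda_1 t}u_1$, evaluate at the maximizer to get the tail bound $e^{-\lambda_1 t}\leq \mathbb{P}_{x_0}(\tau>t)$, and integrate in $t$. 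What is worth pointing out is that your method is precisely the one the paper uses to prove its \emph{main} theorem (the quantile variant): there the same Feynman--Kac identity and the same $L^\infty$ bound on $u(\omega(t))$ are used, only one stops at the tail estimate rather than integrating it. So you have, in effect, recovered the classical Donsker--Varadhan inequality as the ``integrated'' version of the paper's quantile bound---exactly the connection you anticipate in your last paragraph. The paper's maximum-principle proof is slightly more self-contained (no semigroup machinery), while yours makes the link to the quantile result transparent.
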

\begin{proof}
The proof is simple: note that $w(x) = \mathbb{E}_x \tau_{\Omega^c}$ solves the equation
  \begin{align}
    -\mbox{div}(a(x) \nabla w(x)) + \nabla V \cdot \nabla w &= 1  \qquad \mbox{in}~\Omega \\
    w &= 0 \hspace{21pt} \mbox{on} ~\partial \Omega. \nonumber
  \end{align}
We also observe that, by definition, the first eigenfunction $u(x)$ solves
  \begin{align}
    -\mbox{div}(a(x) \nabla u(x)) + \nabla V \cdot \nabla u &= \lambda_1 u  \qquad \mbox{in}~\Omega \\
    u &= 0 \hspace{31pt} \mbox{on} ~\partial \Omega. \nonumber
  \end{align}
This implies, by linearity,
\begin{equation}
   -\mbox{div}\left(a(x) \nabla   \left[\lambda_1 w(x) \max_{x\in\Omega} \abs{u(x)} - u(x)\right] \right) + \nabla V \cdot \nabla \left(  \lambda_1w(x) \max_{x\in\Omega} \abs{u(x)} - u(x)\right) \geq 0.
\end{equation}
The maximum principle then implies
  \begin{equation}
\lambda_1 w(x) \max_{x\in\Omega} \abs{u(x)} - u(x) \geq 0,
  \end{equation}
which yields
\begin{equation}
 \lambda_1 w(x)  \geq \frac{u(x)}{\max_{x\in\Omega} \abs{u(x)}}
\end{equation}
from which we obtain, by setting $x$ so that $|u|$ assumes its maximum,
  \begin{equation}
    \lambda_1 \max_{x \in \Omega}{ w(x)} \geq 1.
  \end{equation}
\end{proof}

The result can be interpreted in two ways: if, perhaps by symmetry
considerations, it is possible to roughly predict the location that
maximizes the mean first exit time, then the result allows for lower
bounds on the eigenvalue $\lambda_1$ and, conversely, knowledge about
the eigenvalue $\lambda_1$ guarantees the existence of points in the
domain for which the mean first exit time is `large'. Among other
applications, the Donsker-Varadhan estimate is crucially used in the
potential theoretic analysis of metastability in \cite{BdH, BEGK, BGK}
(see Lemma 2.1 in \cite{BGK} where the Lemma is quoted and an
improvement in Lemma 2.2 in the same paper) and in Markov state models
(see e.g., \cite{schutte} and references therein). We shall not focus
too much on the minimal regularity of $L$: the reader may assume that
$a(x)$ is uniformly elliptic and both $a(x)$ and $V(x)$ are smooth; in
practice, the results will hold in much rougher situations and only
relies on the Feynman-Kac formula being applicable (which even allows
moderate singularities in $V$). Moreover, the arguments are versatile
enough to be applicable to Graph Laplacian on Markov chain; the
changes are completely obvious changes of symbols and will not be
detailed in this paper.

\section{The Result}
The Donsker-Varadhan inequality is based on the mean value of the first exit time. We will work
with quantiles of that distribution instead: for fixed $0 < p < 1$, we define the diffusion distance 
to the boundary $d_{p, \partial \Omega}: \Omega \rightarrow \mathbb{R}_{+}$ implicitly as the smallest
number
\begin{equation}
 \mathbb{P}\left(\mbox{first exit time} \geq d_{p, \partial \Omega}(x_0)\right) \leq p,
\end{equation}
where the probability is taken over drift-diffusion processes generated by $-L$ and started in $x_0$.
Our main result is that there is a natural relation between that quantity and the smallest eigenvalue
$\lambda_1$ of the differential operator.
\begin{theorem} We have
\begin{equation}
 d_{p,\partial \Omega}(x) \geq \frac{1}{\lambda} \log{\left(\frac{1}{p} \frac{|u(x)|}{\|u\|_{L^{\infty}(\Omega)} }\right)}.
\end{equation}
\end{theorem}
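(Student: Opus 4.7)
The plan is to use the probabilistic meaning of the eigenfunction via a Feynman--Kac / martingale argument and then convert the resulting identity into a tail bound on the first exit time.

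\textbf{Step 1: build the exponential martingale.} Let $X_t$ denote the drift-diffusion started at $x\in\Omega$ with infinitesimal generator $-L$, and let $\tau=\tau_{\Omega^{c}}$ be its first exit time. Since $u$ satisfies $Lu=\lambda u$, the function $u$ is space-time harmonic for the space-time generator once tilted by $e^{\lambda t}$. More concretely, I would apply It\^o's formula to $M_t := e^{\lambda t}u(X_t)$. The two contributions $\lambda e^{\lambda t}u(X_t)\,dt$ (from the time derivative) and $-e^{\lambda t}Lu(X_t)\,dt=-\lambda e^{\lambda t}u(X_t)\,dt$ (from the generator) cancel, so $M_t$ is a local martingale, and by the assumed regularity it is a true martingale up to the bounded stopping time $t\wedge\tau$.

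\textbf{Step 2: optional stopping plus boundary condition.} Optional stopping yields
\[
u(x) \;=\; \mathbb{E}_{x}\!\left[e^{\lambda(t\wedge\tau)}\,u(X_{t\wedge\tau})\right].
\]
Because $u$ vanishes on $\partial\Omega$, the contribution from the event $\{\tau\le t\}$ is zero, so
\[
u(x) \;=\; \mathbb{E}_{x}\!\left[e^{\lambda t}\,u(X_t)\,\mathbf{1}_{\{\tau>t\}}\right].
\]

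\textbf{Step 3: pass to absolute values and invert.} Taking absolute values and bounding $|u(X_t)|$ by $\|u\|_{L^\infty(\Omega)}$ gives
\[
|u(x)| \;\le\; e^{\lambda t}\,\|u\|_{L^\infty(\Omega)}\,\mathbb{P}_{x}(\tau>t),
\]
so
\[
\mathbb{P}_{x}(\tau>t) \;\ge\; e^{-\lambda t}\,\frac{|u(x)|}{\|u\|_{L^\infty(\Omega)}}.
\]
By definition of the quantile, $d_{p,\partial\Omega}(x)$ is the smallest $t$ for which the left-hand side drops to $p$. Hence whenever the right-hand side still exceeds $p$, i.e.\ for every $t\le \lambda^{-1}\log\!\bigl(\tfrac{1}{p}\tfrac{|u(x)|}{\|u\|_{L^\infty(\Omega)}}\bigr)$, we must have $d_{p,\partial\Omega}(x)\ge t$. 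Taking $t$ up to this threshold gives the claimed inequality (and the bound is vacuous when the logarithm is negative, so there is nothing to check in that regime).

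\textbf{Main obstacle.} The analytic content is very light; the one delicate point is justifying that $M_t$ is a genuine martingale (not merely local) and that optional stopping applies at $t\wedge\tau$. Under the standing smoothness/ellipticity assumptions mentioned after Donsker--Varadhan's theorem, this is standard: $u$ and $\nabla u$ are bounded on $\Omega$, so the stochastic integral part of $M_{t\wedge\tau}$ has bounded quadratic variation on any finite interval. The derivation of the global corollary $\lambda_1\ge \log(1/p)/\sup_x d_{p,\partial\Omega}(x)$ and the convergence as $p\to 0$ then follow by choosing $x$ where $|u|$ attains its maximum.
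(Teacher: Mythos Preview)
Your proposal is correct and follows essentially the same route as the paper. The paper simply invokes the Feynman--Kac identity $u(x)=e^{\lambda t}\,\mathbb{E}_x[u(\omega(t))]$ (with the killed process convention) directly and then plugs in $t=d_{p,\partial\Omega}(x)$, whereas you derive that identity via the It\^o/optional-stopping martingale argument and phrase the conclusion as a survival-probability lower bound for all $t$ before reading off the quantile threshold; the content is identical.
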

We are not aware of this result being in the literature. Related
statements seem to have first appeared in \cite{manas, stein1}, a
discrete analogue was given by Cheng, Rachh and the second author in
\cite{xiu}. In most cases, the definition may be simplified as $ \mathbb{P}\left(\mbox{first exit time} \geq d_{p, \partial \Omega}(x_0)\right) = p,$
however, the definition above also covers time-discrete processes on Markov chains with absorbing
states where a similar estimate can be easily obtained (we leave the details to the reader).
\begin{corollary}[Donsker-Varadhan for Quantiles]
  \begin{equation}
    \lambda_1 \geq \frac{\log (1/p)}{\sup_{x \in \Omega} d_{p,\partial \Omega}(x)}.
  \end{equation}
Moreover, the right-hand side converges to $\lambda_1$ as $p \rightarrow 0$.
\end{corollary}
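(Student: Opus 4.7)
The plan is to extract the stated inequality directly from the Theorem, and then establish the convergence by producing a matching upper bound on the tail of the first-exit-time distribution.

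For the inequality, let $x^\star \in \overline{\Omega}$ be a point at which $|u|$ attains its maximum, so that $|u(x^\star)|/\|u\|_{L^\infty(\Omega)} = 1$. Substituting $x = x^\star$ into the conclusion of the Theorem collapses the logarithm to
\begin{equation*}
 d_{p,\partial\Omega}(x^\star) \geq \frac{1}{\lambda_1}\log(1/p),
\end{equation*}
so that $\sup_{x \in \Omega} d_{p,\partial\Omega}(x) \geq \frac{1}{\lambda_1}\log(1/p)$, which is exactly the desired inequality after rearrangement.

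For the convergence $p \to 0$, I would aim to prove a matching upper bound of the form $\sup_x d_{p,\partial\Omega}(x) \leq \frac{1}{\lambda_1}\bigl(\log(1/p) + O(1)\bigr)$. The natural tool is the observation that $v(t,x) := \mathbb{P}_x(\tau_{\Omega^c} > t)$ solves $\partial_t v + L v = 0$ in $\Omega$ with Dirichlet data and initial condition $v(0,\cdot) \equiv 1$. Expanding the constant initial datum in the (biorthogonal, in the non-self-adjoint case) eigensystem of $L$ and exploiting the spectral gap $\lambda_2 > \lambda_1$, one obtains a pointwise upper bound $v(t,x) \leq C(x)\, e^{-\lambda_1 t}$ with $C$ bounded on $\Omega$. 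By the defining property of $d_{p,\partial\Omega}$, this forces $d_{p,\partial\Omega}(x) \leq \frac{1}{\lambda_1}\log(C(x)/p)$, and taking the supremum in $x$ gives the claimed upper bound. Sandwiching the ratio $\log(1/p)/\sup_x d_{p,\partial\Omega}(x)$ between $\lambda_1$ (from the first half of the corollary) and $\lambda_1 \log(1/p)/\bigl(\log(1/p) + O(1)\bigr)$ (from the upper bound), both of which tend to $\lambda_1$, then closes the argument.

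The step I expect to be the real obstacle is the exponential survival bound $v(t,x) \leq C(x)\, e^{-\lambda_1 t}$, since $L$ is not self-adjoint in general because of the drift term $\nabla V \cdot \nabla$, so one cannot simply appeal to a symmetric spectral expansion. A clean way around this is to invoke compactness of the killed semigroup on the bounded domain together with Krein-Rutman/Jentzsch theory for the principal eigenpair; equivalently, an $h$-transform style argument showing that $e^{\lambda_1 t} v(t,x)/u(x)$ remains uniformly bounded (in fact converges) as $t \to \infty$ would suffice. Once that exponential decay is in hand, the remainder of the convergence argument is a routine logarithmic comparison.
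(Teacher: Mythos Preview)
Your proposal is correct and follows essentially the same route as the paper: the inequality drops out of the Theorem by evaluating at the maximum of $|u|$, and the convergence is obtained by expanding the survival probability $\mathbb{P}_x(\tau_{\Omega^c}>t)$ spectrally and using the gap to isolate the $e^{-\lambda_1 t}$ decay. If anything you are more careful than the paper, which simply invokes ``the spectral theorem'' without addressing the non-self-adjoint drift, whereas you explicitly flag the need for Krein--Rutman/$h$-transform machinery to justify the expansion.
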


We observe two major differences that become relevant when 
estimating $d_{p,\partial \Omega}(x)$ with a Monte Carlo method:
\begin{enumerate}
\item
instead of having to compute a mean (which, especially for heavy-tail
distributions, can be difficult), it suffices to estimate the
likelihood of exiting within a fixed time $t$. The desired outcome is
a Bernoulli variable with likelihood $p$ -- the problem thus reduces
to estimating the parameter in a $\left\{0,1\right\}$ Bernoulli
distribution and adjusting time $t$, which is more stable.
\item By decreasing the value of $p$, the result can be arbitrarily
refined -- the difficulty being that estimating the parameter becomes
more computationally costly as $p \rightarrow 0$, as one needs more
simulations to ensure that there are enough samples in the $p-$th
quantile to give a stable estimation of the Bernoulli parameter. In practice, the available amount of computation will impose a
restriction on the value of $p$ that can be reasonably estimated with a
certain degree of confidence.
\end{enumerate}

\section{Proofs}
\subsection{Proof of the Theorem.}
\begin{proof}  We assume w.l.o.g. that $u(x) > 0$ and defube the parameter $0 < \delta < 1$ implicitly via  $\delta \|u\|_{L^{\infty}} = u(x)$. We
use $\omega(t)$ to denote drift-diffusion process (associated to the Feynman-Kac formula) started in $x$ and running up to time $t$. Since 
 \begin{equation}
   -\mbox{div}(a(x) \nabla u(x)) + \nabla V \cdot \nabla u = \lambda_1 u,
\end{equation}
we have that
\begin{equation}
 u(x) = e^{\lambda t}      \mathbb{E}_{x}\left(u(\omega(t))  \right)
\end{equation}
with the convention that $u(\omega(t))$ is 0 if the drift-diffusion processes leaves $\Omega$
at some point in the interval $[0,t]$. Let now $t= d_{p,\partial \Omega}(x)$, in which case we see that
 \begin{equation}
 \mathbb{E}_{x}\left(u(\omega(t))  \right) \leq  p \|u\|_{L^{\infty}}  + (1-p)0.
\end{equation}
Altogether, we obtain
\begin{equation}
\delta \|u\|_{L^{\infty}} = u(x) =  e^{\lambda d_{p,\partial \Omega}(x)}  \mathbb{E}_{x}\left(u(\omega(t))  \right) \leq e^{\lambda  d_{\partial \Omega}(x)}  p \|u\|_{L^{\infty}} 
\end{equation}
from which the statement follows.
\end{proof}

\subsection{Proof of the Corollary.}
\begin{proof}  It remains to show that the lower bound is asymptotically sharp as $p \rightarrow 0^+$.
Let $x \in \Omega$ be arbitrary and let $\delta_x$ be the Dirac distribution centered at $x$. We are
interested in the long-time behavior of applying the drift-diffusion process to these initial conditions;
denoting the eigenpairs of the differential operator by $(\lambda_k, \phi_k)$, we can use the spectral
theorem (see e.g. \cite{stroock})  to estimate
\begin{equation}
 \int_{\Omega}{ e^{(\di(a(x)\nabla \cdot) - \nabla V \cdot \nabla \cdot)t} \delta_x dz} = \int_{\Omega}{ \sum_{k=1}^{\infty}{ e^{-\lambda_k t} \left\langle \delta_x, \phi_k\right\rangle \phi_k(z)} dz}.
\end{equation}
The spectral gap implies that, as $t \rightarrow \infty$,
\begin{equation}
 \int_{\Omega}{ \sum_{k=1}^{\infty}{ e^{-\lambda_k t} \left\langle \delta_x, \phi_k\right\rangle \phi_k(z)} dz} = \phi_1(x) e^{-\lambda_1 t}\int_{\Omega}{\phi_1(z) dz} + o\left(e^{-\lambda_1 t}\right).
\end{equation}
This means that, asymptotically as $t \rightarrow \infty$, the survival probability is maximized by starting in the point in which the first eigenfunction assumes
a global maximum. Conversely, the case $p \rightarrow 0$ is equivalent to the case $t \rightarrow \infty$ and by locating $x$ in the point $x_0 \in \Omega$, where the ground state
assumes its maximum, we get that
\begin{equation}
 \sup_{x \in \Omega} d_{p,\partial \Omega}(x) = (1+o(1)) d_{p, \partial \Omega}(x_0).
\end{equation}
The computation above shows, as $p \rightarrow 0$ and $t \rightarrow \infty$
\begin{equation}
 e^{(-\di(a(x)\nabla \cdot) + \nabla V \cdot \nabla \cdot)t} \delta_{x_0} = (1+o(1)) \| \phi_1\|_{L^{\infty}} e^{-\lambda_1 t} \int_{\Omega}{\phi_1(x) dx}.
\end{equation}
This implies nontrivial bounds on the logarithm of the survival probability
\begin{equation}
 \log{\mathbb{P}\left(\mbox{first exit time} \geq t\right)} = - (\lambda_1+o(1)) t.
\end{equation}
Then, by definition,
\begin{equation}
\log{p} = \log{\mathbb{P}\left(\mbox{first exit time} \geq d_{p, \partial(\Omega)}(x_0)\right)} =  - (\lambda_1+o(1)) d_{p, \partial \Omega}(x_0)
\end{equation}
and this then implies
\begin{equation}
(1+o(1)) \lambda_1 \sup_{x \in \Omega}{ d_{p,\partial \Omega}(x)} = \log{(1/p)}.
\end{equation}
\end{proof}
The main idea of the argument is that $p \rightarrow 0$ naturally corresponds to $t \rightarrow \infty$. The 
spectral theorem implies that long-time asymptotics is essentially given by the first eigenvalue 
and the first eigenfunction via
\begin{equation}
 e^{-t L} f \sim   e^{-\lambda_1 t} \left\langle f, \phi_1 \right\rangle \phi_1
\end{equation}
and this is how we indirectly obtain estimates on $\lambda_1$. This also suggests that it might perhaps
be possible to obtain estimates on the convergence speed depending on the spectral gap.

\section{Numerical Examples}

\subsection{Unit interval.} A toy example is given by
\begin{align}
 -\Delta u &= \lambda u \qquad    \text{in }[0,1] \\
u(0) &= 0 = u(1). \nonumber
\end{align}
The ground state is $u(x) = \sin{\pi x}$ and $\lambda_1 = \pi^2 \sim 9.86..$ -- the Donsker-Varadhan estimate
requires us to solve $-\Delta w = 1$, which easily gives $w(x) = x/2 - x^2/2$ and from which we get the
lower bound $ \lambda \geq 8$.
In comparison, our bound for various values of $p$ are
\vspace{5pt}
\begin{center}
\begin{tabular}{ l c c c c c| c  r }
  $p$ & $1/2$ & $1/4$ & $10^{-1}$ & $10^{-2}$  & $10^{-8}$ & Donsker-Varadhan \\
\mbox{lower bound} &$7.28$ & $8.40$ & $8.92$  & $9.39$ & $9.74$ & $8$
\end{tabular}
\end{center}
\vspace{5pt}

\subsection{Unit interval with a quadratic potential.} 
Let us consider a $1D$ example with a quadratic potential
$V = \frac{1}{2} x^2$ on $[-1, 1]$:
\begin{align}
 -\Delta u + x \nabla u &= \lambda u \qquad    \text{in }[-1,1] \\
u(-1) &= 0 = u(1). \nonumber
\end{align}
The ground state is $u(x) = 1 - x^2$ with $\lambda = 2$. The mean first exit time $w$ solves 
\begin{equation}
  -\Delta w + x \nabla w = 1 \qquad \text{in }[-1,1]
\end{equation}
with Dirichlet boundary condition. Solving the equation by central
difference scheme with mesh size $h = 10^{-4}$ yields the
Donsker-Varadhan estimate $\lambda \geq 1.678$. To use our bound for
various values of $p$, we simulate $10^4$ paths using an Euler-Maruyama
scheme with time step size $t = 10^{-4}$ starting at the origin
(thanks to the symmetry), the following lower bounds are obtained.
\vspace{5pt}
\begin{center}
\begin{tabular}{ l  c c c c c  |c  r }
    $p$ & $0.5$ & $0.3$ & $0.2$ & $0.1$  & $0.05$ &  Donsker-Varadhan \\
\mbox{lower bound} & $1.522$ & $1.675$ &  $1.740$  & $1.799$ & $1.834$ & $1.678$
\end{tabular}
\end{center}
\vspace{5pt}

\subsection{Unit disk.} Finally, we estimate the ground state of the Laplacian on the unit disk in $\mathbb{R}^2$, which is given by the first nontrivial zero of the Bessel function $ \lambda_1 \sim 2.40\dots$ while the Donsker-Varadhan estimate gives 
\begin{equation}
w(x) = 1/2-(x^2 + y^2)/2 \quad \mbox{and thus} \quad   \lambda_1 \geq 2.
\end{equation}
Suppose we could not solve
any of these equations in closed form (as is usually the case): using the symmetry of the domain, it suffices to take Brownian
motion started in the origin. Discrete Brownian motion with step size (in time) $t=10^{-4}$ and $10^4$ paths give the
following estimates for a lower bound on $\lambda_1$

\vspace{5pt}
\begin{center}
\begin{tabular}{ l  c c c c c  |c  r }
    $p$ & $0.5$ & $0.4$ & $0.3$ & $0.2$  & $0.1$ &  Donsker-Varadhan \\
\mbox{lower bound} & $1.68$ & $1.85$ & $2.04$  & $2.19$ & $2.37$ & $1.96$
\end{tabular}
\end{center}
\vspace{5pt}

\end{document}